\documentclass[11pt,a4paper]{article}

\usepackage{color}

\usepackage{theorem,enumerate}
\usepackage{amsmath,latexsym,amssymb,amsfonts}
\usepackage{eucal}
\usepackage{mathrsfs}
\usepackage{array}
\usepackage{footmisc}
\usepackage{graphicx, color}
\usepackage{caption,subcaption}


\newtheorem{theorem}{Theorem} \rm
\newtheorem{lemma}[theorem]{Lemma}

\newtheorem{corollary}[theorem]{Corollary}

\newtheorem{definition}[theorem]{Definition}

\newtheorem{question}[theorem]{Question}
\numberwithin{theorem}{section}

\newcommand{\be}{\beta}
\newcommand{\ga}{\gamma}
\newcommand{\om}{\omega}
\newcommand{\al}{\alpha}



\newcommand{\qed}{\hfill \ensuremath{\Box}}

\begin{document}
\title{\bf The Alon-Tarsi number of subgraphs of a planar graph}

\author{
Ringi Kim\thanks{Department of Mathematical Sciences, KAIST, Korea.
e-mail: {\tt kimrg@kaist.ac.kr}}
\thanks{This work was supported by the National Research Foundation of Korea(NRF) grant funded by the Korea government(MSIT)(NRF-2018R1C1B6003786)}
,
Seog-Jin Kim\thanks{Department of Mathematics Education, Konkuk University,
Korea.
e-mail: {\tt skim12@konkuk.ac.kr
}} \thanks{
This work was supported by the National Research Foundation of Korea(NRF) grant funded by the Korea government(MSIT) (NRF-2018R1A2B6003412).},
Xuding Zhu\thanks{Department of Mathematics, Zhejiang Normal University, China
e-mail: {\tt  xdzhu@zjnu.edu.cn
}} \thanks{Grant Numbers: NSFC 11571319 and 111 project of Ministry of Education of China.}
}

\maketitle

\begin{abstract}
This paper constructs a planar graph $G_1$ such that for any subgraph $H$ of $G_1$ with maximum degree $\Delta(H) \le 3$, $G_1-E(H)$ is not $3$-choosable, and a planar graph $G_2$ such that for any star forest $F$ in $G_2$,  $G_2-E(F)$ contains a copy of $K_4$ and hence $G_2-E(F)$ is not $3$-colourable.  
On the other hand, we prove that every planar graph $G$ contains a forest $F$ such that the Alon-Tarsi number of $G - E(F)$ is at most $3$, and hence     $G - E(F)$ is 3-paintable and 3-choosable.   
\end{abstract}

Key words: Choice number, Alon-Tarsi number, planar graph, forest
\section{Introduction}

Assume $G$ is a graph and $d$ is  a non-negative integer. A  {\em $d$-defective colouring} of $G$ is a colouring $\phi$ of the vertices of $G$ such that each colour class induces a subgraph of maximum degree at most $d$. A $0$-defective colouring of $G$ is also called a {\em proper colouring} of $G$.

A \emph{$k$-list assignment} of a graph $G$ is a mapping $L$ which assigns to each vertex $v$ of $G$ a set $L(v)$ of $k$ permissible colours.  Given a $k$-list assignment $L$ of $G$, a  \emph{$d$-defective $L$-colouring} of G is a $d$-defective colouring $\phi$ of $G$ such that  $\phi(v) \in L(v)$
for each vertex $v$ of $G$.   We say $G$ is \emph{$d$-defective $k$-choosable} if $G$ has a $d$-defective $L$-colouring for every $k$-list assignment $L$. We say $G$ is {\em $k$-choosable } if $G$ is $0$-defective $k$-choosable.
The {\em   choice number $\chi_{\ell}(G)$} of a graph $G$ is defined as the least integer $k$ such that $G$ is $k$-choosable.

Defective list colouring of graphs has been studied a lot in the literature. It was proved in \cite{CCW} that every outerplanar graph is $2$-defective $2$-colourable and every planar graph is $2$-defective $3$-colourable. These results   were generalized  in \cite{EH} and \cite{Skrekovski1999}, where the authors  proved independently that every outerplanar graph is $2$-defective $2$-choosable and every planar graph is $2$-defective $3$-choosable.  Both papers \cite{EH} and \cite{Skrekovski1999} asked the question whether every planar graph is $1$-defective $4$-choosable. One decade later, Cushing and Kierstead \cite{CKierstead} answered this question in affirmative. 

On-line version of defective list colouring was first studied in \cite{HZ2016}. It is defined through a two-person game. Given a graph $G$ and non-negative integers $d, k$, the {\em $d$-defective $k$-painting game on $G$} is played by two players: Lister and Painter. Initially, each vertex of $G$ has $k$ tokens and is uncoloured. In each round, Lister selects a set $U$ of uncoloured vertices and takes away one token from each vertex in $U$. Painter selects a subset $X$ of $U$ such that the induced subgraph $G[X]$ has maximum degree at most $d$, and colours all the vertices of $X$. If at the end of some round, there is an uncoloured vertex with no token left, then Lister wins the game. Otherwise, at the end of some round, all the vertices are coloured and Painter wins the game. We say $G$ is {\em $d$-defective $k$-paintable} if Painter has a winning strategy in this game. We say $G$ is {\em $k$-paintable} if $G$ is $0$-defective $k$-paintable. 
The {\em paint number} $\chi_P$ of $G$ is defined as the mimimum $k$ such that $G$ is $k$-paintable.

It follows from the definition that if $G$ is $d$-defective $k$-paintable then $G$ is $d$-defective $k$-choosable. The converse is not necessarily true.
It was proved in \cite{HZ2016} that every outerplanar graph is $2$-defective $2$-paintable and for every surface $\Sigma$, there is a constant $w$ such that every graph embedded in $\Sigma$ with edge-width at least $w$ is $2$-defective $4$-paintable. In particular, every planar graph is $2$-defective $4$-paintable. 
It was shown in \cite{GHKZ} that every planar graph is $3$-defective $3$-paintable, but there are planar graphs that are not $2$-defective $3$-paintable. The problem whether every planar graph is $1$-defective $4$-paintable remained open for a while, and recently the problem is settled. As a consequence of the main result  in \cite{GZ},  every planar graph is indeed $1$-defective $4$-paintable.

The main result in \cite{GZ} is    about the Alon-Tarsi number of subgraphs of a planar graph.   
 Assume $G$ is a graph.
We associate to each vertex $v$ of $G$ a variable $x_v$. The graph polynomial $P_G(x)$ of $G$ is defined as
\[
P_G(\mathbf{x}) = \prod_{uv \in E(G), \\ u < v} (x_v - x_u)
\]
where $\mathbf{x} = \{x_v : v \in V (G)\}$ denotes the sequence of variables ordered according  to some fixed linear ordering `$<$' of the vertices of G. It is easy to see that a mapping $\phi : V \rightarrow \mathbb{R}$ is a proper colouring of $G$ if and only if $P_G(\phi) \neq 0$, where $P_G(\phi)$ means to evaluate the polynomial at $x_v = \phi(v)$ for $v \in V (G)$. Thus to find a proper colouring of $G$ is equivalent to find an assignment of $\mathbf{x}$ so that the polynomial evaluated at this assignment is non-zero. 

Assume now that $f(\mathbf{x})$ is any real polynomial with variable set $X$. 
Let $\eta$ be a mapping which assigns to each variable $x$ a non-negative integer $\eta(x)$. We denote by $\mathbf{x}^{\eta}$ the monomial $\prod_{x \in X} x^{\eta(x)}$
determined by mapping $\eta$, which we call then the exponent of that monomial. Let $c_{f,\eta}$ denote the coefficient of $\mathbf{x}^{\eta}$ in the expansion of $f(\mathbf{x})$ into the sum of monomials. The Combinatorial Nullstellensatz of \cite{Alon} asserts that if
$\sum_{x \in X} \eta(x) = deg f$ and $c_{f, \eta} \neq 0$, then for arbitrary sets $A_x$ assigned to variables $x \in X$ with $|A_x| \geq \eta(x) +1$, 
there exists a mapping $\phi : X \rightarrow \mathbb{R}$ such that $\phi(x) \in A_x$ for each $x \in X$ and $f(\phi) \neq 0$. 

In particular, Combinatorial Nullstellensatz implies that if $c_{P_G,\eta} \neq 0$ and $\eta(x_v) < k$ for all $v \in V$, then $G$ is $k$-choosable. This  is now a main  tool in the study of list colouring of graphs.  This result was  strengthened by Schauz \cite{Uwe}, who showed that under the same assumptions, the graph $G$ is also $k$-paintable. Jensen and Toft \cite{Toft} defined the Alon-Tarsi number (\emph{AT number} for short) $AT(G)$ of a graph G as
\[
AT(G) = \mbox{min} \{k : c_{P_G, \eta} \neq 0 \mbox{ for some 
} \eta \mbox{ with } \eta(x_v) < k \mbox{ for all } v \in V(G) \}.
\]
As discussed above,   for every graph $G$, 
\[
\chi_{\ell} (G) \leq \chi_P(G) \leq AT(G).
\]
As observed in \cite{Hefetz}, apart from being an upper bound for the choice number and the paint number, the Alon-Tarsi number of a graph has certain distinct features and  is a graph invariant of independent interests.
 
It is known \cite{GK2016} that  the gaps between $AT(G)$ and $\chi_P(G)$, and between $\chi_P(G)$ and $\chi_{\ell}(G)$,  can be arbitrarily large.  Nevertheless, upper bounds for the choice numbers of many  classes of graphs are also upper bounds for their Alon-Tarsi number.  For example, Thomassen \cite{Thomassen} proved that every planar graph is 5-choosable. As a strengthening of this result, it was shown in \cite{Zhu-5} that every planar graph $G$ satisfies $AT(G) \leq 5$. Recently, the following result was   proved in \cite{GZ}.

\begin{theorem}
	\label{thm-gz}
	Every planar graph $G$ has a matching $M$ such that $G-M$ has Alon-Tarsi number at most $4$.
\end{theorem}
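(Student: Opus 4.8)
The plan is to use the orientation/coefficient reformulation of the Alon-Tarsi number provided by the Combinatorial Nullstellensatz set-up above. By definition $AT(H)\le 4$ holds exactly when $P_H$ has a monomial $\mathbf{x}^{\eta}$ with $\eta(x_v)\le 3$ for every vertex $v$ and with $c_{P_H,\eta}\neq 0$; since such an $\eta$ is forced to be top-degree, the theorem of Alon and Tarsi makes this equivalent to the existence of an orientation $D$ of $H$ with maximum out-degree at most $3$ whose numbers of even and odd spanning Eulerian sub-digraphs differ, $EE(D)\neq OE(D)$ (call such a $D$ an \emph{AT-orientation}). Before starting I would reduce to the case where $G$ is a plane triangulation: the Alon-Tarsi number is monotone under taking subgraphs, so if $G\subseteq G^{+}$ with $G^{+}$ a triangulation and $M^{+}$ is a matching of $G^{+}$ with $AT(G^{+}-M^{+})\le 4$, then $M=M^{+}\cap E(G)$ is a matching of $G$ and $G-M\subseteq G^{+}-M^{+}$ gives $AT(G-M)\le 4$.

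The point that forces real work is that a plane triangulation is not $3$-degenerate, so there is in general no acyclic orientation with out-degrees at most $3$; the orientation we produce must genuinely use the cancellation $EE(D)\neq OE(D)$ coming from its directed cycles, and this is also why the matching cannot be avoided. To make the construction inductive I would prove a stronger statement about near-triangulations, in the spirit of Thomassen's proof of $5$-choosability and Zhu's proof that planar graphs satisfy $AT\le 5$. Let $G$ be a plane near-triangulation with outer cycle $C=v_1v_2\cdots v_p$. Assign out-degree budgets as follows: two fixed consecutive boundary vertices, say $v_1,v_2$, are treated as already handled (budgets $0$ and $1$), every other boundary vertex gets budget $2$, and every interior vertex gets budget $3$. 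The statement to prove by induction on $|V(G)|$ is that there exist a matching $M\subseteq E(G)$ and an orientation $D$ of $G-M$ obeying all these budgets with $c_{P_{G-M},\eta}\neq 0$ for the associated exponent $\eta$. As every budget is at most $3$, applying this with $C$ the outer triangle of a triangulation yields the theorem.

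The induction follows Thomassen's two cases. If $C$ has a chord, I would cut $G$ along it into two smaller near-triangulations sharing that edge, solve each by induction, and glue: the shared edge lets the two coefficients be multiplied into a nonzero product, and the two partial matchings meet only at the chord's endpoints, where I would impose a compatibility condition so that their union is again a matching. If $C$ is chordless, I would process the boundary vertex $v_p$ adjacent to $v_1$; its neighbours other than $v_1$ and $v_{p-1}$ are interior vertices $w_1,\dots,w_k$ that form a path and become part of the new outer cycle. I would orient the edges at $v_p$ using at most its allotted number of out-edges, invoke induction on $G-v_p$ with the budgets of $w_1,\dots,w_k$ each reduced by one (they pass from interior to boundary), and then reattach $v_p$. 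At the place where this reduction would drop a $w_i$ below what the induction needs, I would instead move one edge incident to $v_p$ into the matching $M$; removing that edge frees exactly one unit of out-degree, and this is precisely the step that brings the matching into being.

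The part I expect to be the main obstacle is carrying two separate bookkeeping tasks through the recursion at once. First one must certify that the coefficient stays nonzero at every step. In the chord case this is multiplicativity, but in the vertex-removal case $v_p$ carries both in- and out-edges, so $EE-OE$ is not simply multiplicative; one must instead track how the relevant coefficient of $P_{G-M}$ transforms when the factors $(x_{v_p}-x_{w_i})$ are reintroduced and argue that a nonzero combination survives. This is the technical heart of Zhu's $AT\le 5$ argument, here pushed down by one unit, so tightening it to budget $3$ is where most of the effort will go. Second, and particular to this theorem, one must keep the edges added to $M$ across all recursive calls pairwise disjoint: a vertex can be a candidate for a matching edge both when it is deleted and again when one of its former neighbours is later processed, so the inductive invariant has to record which boundary vertices are already saturated by $M$ and forbid charging them twice. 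Reconciling this saturation constraint with the requirement to free a unit of out-degree exactly where a budget is tight is, I expect, the crux of the whole argument.
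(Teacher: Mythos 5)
This theorem is not proved in the present paper: it is quoted from Grytczuk and Zhu \cite{GZ}, and the closest in-paper analogue is Lemma~\ref{main-lemma} (the forest/AT-number-$3$ version). Your proposal correctly identifies the family of arguments that \cite{GZ} (and Lemma~\ref{main-lemma}) use --- a Thomassen-style induction on near-triangulations with out-degree budgets $0,1$ on two distinguished boundary vertices, $2$ on the rest of the boundary, and $3$ in the interior, split into a chord case and a boundary-vertex-removal case. However, what you have written is a roadmap, not a proof: the two steps you yourself flag as ``the technical heart'' and ``the crux'' are exactly the steps that constitute the proof, and they are left unresolved. Most importantly, in the vertex-removal case you say one must ``track how the relevant coefficient of $P_{G-M}$ transforms when the factors $(x_{v_p}-x_{w_i})$ are reintroduced and argue that a nonzero combination survives,'' but you give no mechanism for doing so. The known resolution --- visible in Case~2 of the proof of Lemma~\ref{main-lemma} --- is to choose the orientation at the reattached vertex so that \emph{no directed cycle passes through it at all}: all new arcs point into the removed vertex except a single out-arc toward a vertex of out-degree $0$, whence every spanning Eulerian sub-digraph of $D$ is one of $D'$ together with an isolated vertex and $|EE(D)|-|OE(D)|$ is literally unchanged. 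Without identifying this (or some substitute), the induction does not close, since $EE-OE$ is indeed not multiplicative across a cut vertex carrying both in- and out-arcs.

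The second gap is the matching invariant. You note that a vertex could be charged twice and that ``the inductive invariant has to record which boundary vertices are already saturated by $M$,'' but you never state that invariant, and both problematic steps depend on it: in the chord case the two recursively produced matchings could each cover an endpoint of the chord, and in the chordless case the edge you move into $M$ at $v_p$ could collide with an edge placed into $M$ in the recursive call on $G-v_p$. In \cite{GZ} this is handled by building into the inductive statement a restriction on which vertices $M$ may saturate (the analogue in Lemma~\ref{main-lemma} is that the tree structure of $F$ is controlled through the handle edge); some such strengthening of the hypothesis is mandatory, and finding one that is simultaneously strong enough to glue and weak enough to prove is not a routine matter. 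As it stands, the proposal correctly predicts the architecture of the argument but does not supply the two ideas that make it work.
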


This theorem   implies that every planar graph $G$ is $1$-defective $4$-paintable, however, it says something more. To prove that $G$ is $1$-defective $4$-paintable,
we need to show that Painter has a winning strategy in the $1$-defective $4$-painting game. This means that no matter what are Lister's moves, Painter can construct a colouring of $G$,  so that the edges that are not properly coloured form a matching $M$. This matching $M$ depends on Lister's move. However, Theorem \ref{thm-gz} asserts that there is such a matching $M$ that does not depend on Lister's moves. Similarly, to prove that every planar graph   $G$  is $1$-defective $4$-choosable, it amounts to show that for any $4$-list assignment $L$ of $G$, there is a matching $M$ such that $G-M$ is $L$-colourable. In the proof of this result in \cite{CKierstead}, the choice of the matching $M$ depends on $L$. However, Theorem \ref{thm-gz} implies that there is a matching $M$ that works for all $4$-list assignments $L$. 

The result  that every planar graph is $2$-defective $3$-choosable means that for every $3$-list assignment $L$ of a planar graph $G$, there is a subgraph $H$ of $G$ with maximum degree $\Delta(H) \le 2$ such that $G-E(H)$ is $L$-colourable; the result  that every planar graph is $3$-defective $3$-paintable means that for any   Lister's moves in the painting game of a planar graph $G$ with each vertex having $3$ tokens, Painter can colour $G$ so that the edges that are not properly coloured form a subgraph $H$ of  maximum degree $\Delta(H) \le 3$. 
The subgraphs $H$ described above depends on the list assignment $L$ or on the   Lister's moves in the game.  
A natural question is whether there is such a subgraph $H$ that works for all list assignments $L$ or for all   Lister's moves.  Even more ambitiously, one can ask whether every planar graph $G$ has a subgraph $H$ of maximum degree at most $3$ such that $G-E(H)$ has Alon-Tarsi number at most $3$. 

In this paper, we   construct a planar graph $G_1$ such that for any subgraph $H$ of $G_1$ with maximum degree at most $3$, $G_1-E(H)$ is not $3$-choosable. This provides negative answers to all the questions above. 
We also construct a planar graph $G_2$ such that for any star-forest $F$ of $G_2$, $G-E(F)$ contains a copy of $K_4$ and hence is not $3$-colourable. On the other hand, we prove that every planar graph $G$ has a forest $F$ such that $G-E(F)$ has Alon-Tarsi number at most $3$. 
It remains an open problem whether there is a constant $d$, every planar graph has a subgraph (or a forest) $H$ of maximum degree at most $d$ such that $G-E(H)$ is $3$-choosable, or $3$-paintable or has Alon-Tarsi number at most $3$.
If the answer is yes, then what is the smallest such   constant $d$?


\section{Examples of planar graphs}
 Let $J_1$ and $J_2$ be the two graphs depicted in Figure~\ref{fig:J1J2}.
For $i=1,2$, the edge $ab$ in $J_i$ is called the {\em handle} of $J_i$.

	\begin{figure} [t!]
	\centering
	\includegraphics[scale=1.0]{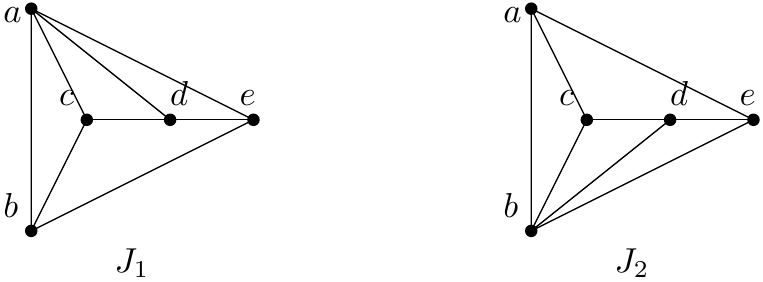}
	\caption{The graphs $J_1$ and $J_2$}
	\label{fig:J1J2}
\end{figure}

Let $\mathcal{J}$ be the set of graphs obtained from the disjoint union of $6$ copies of $J_1$ or $J_2$ by identifying the edges 
corresponding to $ab$ from each copy. 
For each $G\in \mathcal{J}$,
let $c_i,d_i,e_i$ be the vertices corresponding to $c$, $d$, $e$, respectively, for $i\in [6]$, and the edge $ab$ in $G$ is called the {\em handle} of $G$. (See Figure~\ref{fig:J}.)

 	\begin{figure} [t!]
	\centering
	\includegraphics[scale=1.0]{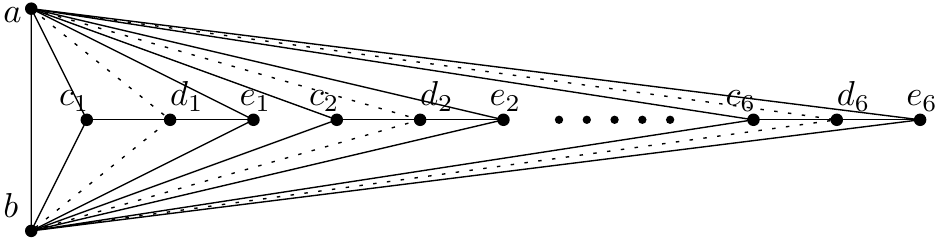}
	\caption{This graph illustrates a graph in $\mathcal{J}$. For each  $i\in [6]$, the subgraph induced on $\{a,b,c_i,d_i,e_i\}$ is isomorphic to $J_1$ or $J_2$ with isomorphism mapping $a,b,c_i,d_i,e_i$ to $a,b,c,d,e$, respectively. That is, there exists exactly one edge between $\{a,b\}$ and $d_i$.}
	\label{fig:J}
	\end{figure}

\begin{lemma}\label{lem:J}
	Every graph $G\in \mathcal{J}$ is not $3$-choosable.
\end{lemma}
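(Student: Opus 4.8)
The plan is to produce a single bad $3$-list assignment $L$ and show that $G$ has no proper $L$-colouring. Because the handle $ab$ is an edge shared by all six copies, I would first fix the lists on the handle to be $L(a)=L(b)=\{1,2,3\}$. In any proper colouring $\phi$ of $G$ the pair $(\phi(a),\phi(b))$ is an ordered pair of \emph{distinct} colours from $\{1,2,3\}$, and there are exactly six such pairs. That this matches the number of copies is the structural clue driving the whole argument: I would fix a bijection between the six copies and the six ordered pairs, and tailor the lists on the private vertices $c_i,d_i,e_i$ of the $i$-th copy so that this copy becomes uncolourable \emph{precisely} when $(\phi(a),\phi(b))$ equals its assigned pair. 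Note that only this ``forbidding'' direction is needed: I do not have to check that a copy is colourable for the other five pairs, only that each pair is killed by its dedicated copy.

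The heart of the argument is therefore a gadget lemma: for each $J\in\{J_1,J_2\}$ and each ordered pair $(\alpha,\beta)$ of distinct colours from $\{1,2,3\}$, there is a choice of $3$-lists on $c,d,e$ so that the copy of $J$, glued along the handle, admits no proper $L$-colouring extending $a\mapsto\alpha,\ b\mapsto\beta$. I would establish this by a finite case analysis: fix $\alpha,\beta$, specify the three private lists, and rule out every way of colouring $c,d,e$ from those lists. The one piece of structure I can already exploit is that $d$ is joined to exactly one of $a,b$ (the feature distinguishing $J_1$ from $J_2$), so the constraint transmitted to $d$ is governed by a single known handle colour; I would use this to pin the colour of $d$ and then force a clash at the remaining vertex, choosing the lists so that every branch of the case tree runs out of available colours.

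Granting the gadget lemma, the conclusion is immediate. Assign to the $i$-th copy the lists furnished by the lemma for the $i$-th ordered pair, and keep $L(a)=L(b)=\{1,2,3\}$; this is a genuine $3$-list assignment of $G$. Suppose for contradiction that $\phi$ is a proper $L$-colouring of $G$. Then $(\phi(a),\phi(b))$ is one of the six ordered pairs, say $(\alpha,\beta)$, assigned to some copy $i$. The restriction of $\phi$ to $\{a,b,c_i,d_i,e_i\}$ would then be a proper $L$-colouring of that copy extending $a\mapsto\alpha,\ b\mapsto\beta$, contradicting the gadget lemma for copy $i$. Hence no such $\phi$ exists, so $G$ is not $3$-choosable.

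I expect the gadget lemma to be the main obstacle, specifically the bookkeeping required to exhibit, for both $J_1$ and $J_2$ and for each of the six target pairs, private lists that genuinely close off every colouring while respecting the adjacencies among $c,d,e$ and to the handle. Ensuring that one uniform recipe works for all six pairs and both gadget types---so that the conclusion holds for \emph{every} member of $\mathcal{J}$, whatever the mix of copies---is where the care lies; getting the interaction between the forced colour of $c$ and the single-edge constraint at $d$ right is the delicate point. The reduction from the lemma to the theorem is routine.
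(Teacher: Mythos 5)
Your overall architecture is exactly the paper's: set $L(a)=L(b)=\{\alpha,\beta,\gamma\}$, match the six copies bijectively with the six ordered pairs of distinct handle colours, and design each copy's private lists to forbid precisely its assigned pair. The reduction from the gadget lemma to the statement is correct as you wrote it. However, the gadget lemma is the entire content of the paper's proof, and you have only asserted it; ``specify the three private lists and rule out every colouring'' is the step that needs to be exhibited, not deferred. There is also a genuine subtlety hiding there that your sketch does not surface: the gadget \emph{cannot} be realized with lists drawn only from $\{\alpha,\beta,\gamma\}$. Indeed, $c$ and $e$ are each adjacent to both $a$ and $b$ but not to each other, so any $3$-element lists for them inside $\{\alpha,\beta,\gamma\}$ force $\phi(c)=\phi(e)=\gamma$; then $d$, which is adjacent to $c$, $e$ and exactly one handle vertex, sees only two distinct colours and any $3$-list on $d$ survives. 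A fourth colour is mandatory.

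The paper's resolution, for the copy assigned the pair $(x_i,y_i)$ with $\{x_i,y_i,z_i\}=\{\alpha,\beta,\gamma\}$: take $L(c_i)=\{\alpha,\beta,\gamma\}$, $L(e_i)=\{x_i,y_i,\omega\}$ for a new colour $\omega$, and $L(d_i)=\{x_i,z_i,\omega\}$ or $\{y_i,z_i,\omega\}$ according as $d_i$ is adjacent to $a$ or to $b$. When $\phi(a)=x_i$ and $\phi(b)=y_i$, the vertex $c_i$ is forced to $z_i$ and $e_i$ is forced to $\omega$, and then $d_i$'s three colours are all blocked by its handle neighbour, $c_i$ and $e_i$. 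Note that the ``forbidding only'' observation you make (no need to verify colourability for the other five pairs) is correct and is implicitly what the paper uses. So your plan is sound and identical in structure to the paper's, but as written it stops short of the one step that carries the mathematical weight.
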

\begin{proof}
	We will define a $3$-list assignment  of $G$ using colours $\al$, $\be$, $\ga$ and $\om$ as follows.
	Let $(x_i,y_i, z_i)_{i=1,\ldots,6}$ be the six permutations of the colour set     $\{\al,\be,\ga\}$.
\begin{itemize}
	\item $L(a)=L(b)=\{\al,\be,\ga\}$.
	\item For each $i \in [6]$,   
	$L(c_{i})=\{\al,\be,\ga\}$ and $L(e_{i})=\{x_i,y_i,\om\}$.
	\item 
	For each $i \in [6]$, $L(d_i)=\{x_i,z_i,\om\}$ if $d_i$ is adjacent to $a$ and $L(d_i)=\{y_i,z_i,\om\}$ if $d_i$ is adjacent to $b$.
\end{itemize}
	
	Suppose there exists an $L$-colouring $\phi$ of $G$.
	We may assume that $\phi(a)=\al$ and $\phi(b)=\be$. 
	Without loss of generality, 
	let $x_1=\al$ and $y_1=\be$. Since $L(c_1)=\{\al,\be,\ga\}$ and $L(e_1)=\{x_1,y_1,\om\}=\{\al,\be,\om\}$, we have 
 $\phi(c_1)=\ga$   and $\phi(e_1)=\om$. 
	If $d_1$ is adjacent to $a$, then $L(d_1)=\{\al,\ga,\om\}$ but $\phi(a)=\al$, $\phi(c_1)=\ga$ and $\phi(e_1)=d$, so there is no available colour for $d_1$.
	Similarly, if $d_1$ is adjacent to $b$, then $L(d_1)=\{\be,\ga,\om\}$ but $\phi(b)=\be$, $\phi(c_1)=\ga$ and $\phi(e_1)=\om$, so there is no available colour for $d_1$.
	By the construction of $G$, $d_1$ is adjacent to either $a$ or $b$, therefore, there is no possible colour for $d_1$.
	This leads to a contradiction. Therefore $G$ is not $3$-choosable.\qed
\end{proof}
\bigskip

Let $J_3$ be the graph depicted in Figure~\ref{fig:J3}.
 	\begin{figure} [t!]
	\centering
	\includegraphics[scale=1.0]{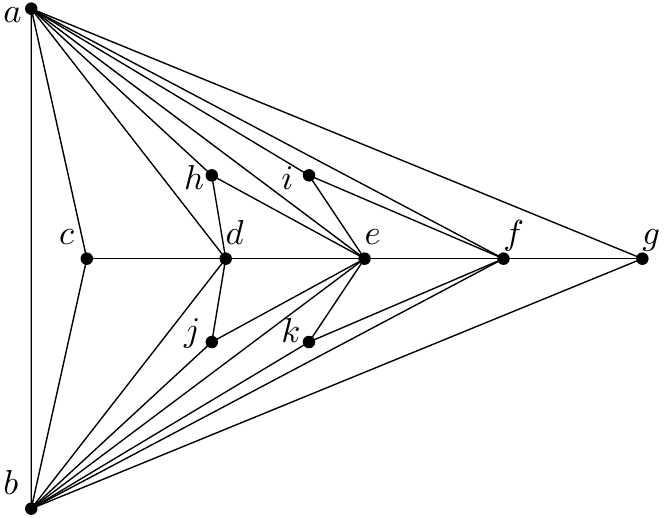}
	\caption{The graph $J_3$.}
	\label{fig:J3}
	\end{figure}

\begin{lemma}\label{lem:J3}
Assume $H$ is a subgraph of $J_3$ with maximum degree at most three.
If $H$ does not contain any edge incident with $a$ or $b$, then $J_3-E(H)$ contains $K_4$, or  a subgraph isomorphic to $J_1$ or $J_2$  with handle $ab$.
\end{lemma}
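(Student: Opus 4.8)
The plan is to read off the relevant structure of $J_3$ from the figure and reduce the statement to a local adjacency bookkeeping near $a$ and $b$. First I would record what a copy of $J_1$ or $J_2$ with handle $ab$ actually requires: from the reconstruction used in Lemma~\ref{lem:J}, such a copy consists of a vertex $c$ adjacent to both $a$ and $b$, a vertex $e$ adjacent to both $a$ and $b$, and a vertex $d$ adjacent to exactly one of $a,b$ and to both $c$ and $e$ (with the handle $ab$ present). The key leverage is the hypothesis that $H$ contains \emph{no} edge incident with $a$ or $b$: in $J_3-E(H)$ every vertex retains all of its edges to $a$ and to $b$, so the adjacencies to $\{a,b\}$ are exactly those of $J_3$, and the only edges that can be missing are the \emph{internal} ones joining two vertices distinct from $a$ and $b$.

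Next I would classify the non-handle vertices of $J_3$ according to the figure into the \emph{common neighbours} of $a$ and $b$ (the candidates to play the roles of $c$ and $e$) and the vertices adjacent to exactly one of $a,b$ (the candidates to play the role of $d$), and then list, for each candidate $d$, its internal edges to the common-neighbour candidates. With this table in hand, the observation is that a copy of $J_1$ or $J_2$ survives in $J_3-E(H)$ as soon as some candidate $d$ still has internal edges to two distinct common neighbours of $a$ and $b$, since the handle and all the edges to $\{a,b\}$ are automatically present.

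The heart of the argument is then a contradiction-plus-counting step. I would assume $J_3-E(H)$ contains neither a $K_4$ nor a copy of $J_1$ or $J_2$ with handle $ab$. Under this assumption, for \emph{every} candidate $d$, the subgraph $H$ must delete all but at most one of $d$'s internal edges to the common-neighbour candidates; otherwise two such edges survive and produce a copy. I would then combine these forced deletions with the degree bound $\Delta(H)\le 3$, which caps the number of internal edges $H$ may remove at any single vertex by $3$. Depending on how $J_3$ distributes the internal edges, tracing through the forced deletions should either exceed the budget of $3$ at some vertex (contradicting $\Delta(H)\le 3$) or, along the complementary branch, leave four pairwise adjacent surviving vertices, i.e.\ a $K_4$ (contradicting the assumption). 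Either way the assumption fails, giving the lemma.

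The hard part will be the finite but delicate case analysis in this last step: one must verify that \emph{no} distribution of at most three deletions per vertex can simultaneously kill every $J_1/J_2$-copy and avoid leaving a $K_4$. The planarity of $J_3$ and the exact incidence pattern in the figure are precisely what make the counts tight, so the real work is to organize the cases efficiently — most naturally by how $H$'s deletions are spread across the $d$-candidates and the shared common neighbours — so that in each case the bound $\Delta(H)\le 3$ is visibly violated or a surviving $K_4$ is exhibited explicitly.
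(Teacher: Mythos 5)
Your overall strategy is the same as the paper's: exploit the hypothesis that $H$ touches no edge at $a$ or $b$ (so all adjacencies to $\{a,b\}$ survive), characterize a surviving $J_1$/$J_2$ copy as an outer vertex that keeps its two internal edges to common neighbours of $a$ and $b$, and then play the forced memberships in $H$ against the bound $\Delta(H)\le 3$. But the proposal stops exactly where the proof has to happen: the ``finite but delicate case analysis'' you defer to the end is the entire content of the lemma, and you never identify the mechanism that makes it work. As written, this is a plan for a proof, not a proof.

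Concretely, the paper's argument has a two-stage structure that your sketch does not reach. The common neighbours of $a$ and $b$ in $J_3$ form a path $c,d,e,f,g$; if any path edge $uv$ survives, then $\{a,b,u,v\}$ is a $K_4$, so $H$ must contain all four edges $cd,de,ef,fg$ --- and this already gives each of $d,e,f$ degree $2$ in $H$. Then each of the four outer vertices $h,i,j,k$ has exactly two internal edges, both landing in $\{d,e,f\}$, and to kill the corresponding $J_1/J_2$ copy $H$ must contain at least one edge from each pair, i.e.\ at least $4$ edges all incident with $\{d,e,f\}$. Pigeonhole puts two of them at a single vertex of $\{d,e,f\}$, which then has degree at least $4$ in $H$ --- contradiction. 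Your counting, by contrast, is aimed at the degrees of the $d$-candidates (each of which only needs one deleted edge, so no contradiction arises there), and it omits the crucial first stage in which the $K_4$ threat preloads degree $2$ onto the shared common neighbours. Without that step the budget of $3$ deletions per vertex is not exceeded, so the contradiction you hope for does not materialize from the bookkeeping you describe.
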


\begin{proof}
	Assume $H$ is a subgraph of $J_3$ which does not contain any edge incident with $a$ or $b$. If any of the edges $cd,de,ef,fg$ is not contained in $H$, then $J_3-E(H)$ contains $K_4$ and we are done. Thus we assume
		$H$ contains $\{cd,de,ef,fg\}$. 
	If $H$   contains neither $hd$ nor $he$, 
	then the edge set $\{ab,ad,ah,ae,hd,he,bd,be\}$ induces a copy of $J_1$ with handle $ab$ in $J_3-E(H)$ and we are done. 
	So we assume that  $H$ contains   $hd$ or $he$. 
	Similarly, $H$ contains  $ie$ or $if$,  $jd$ or $je$, and $ke$ or $kf$.
	Therefore $|E(H)\cap \{hd,he, ie,if, jd,je, ke,kf\}| \ge 4$.  
	Since every edge in $\{hd,he, ie,if, jd,je, ke,kf\}$ is incident with $d$, $e$ or $f$, 
	it follows from the pigeonhole principle that one of $d$, $e$ and $f$ is an end of at least two edges in $E(H) \cap \{hd,he, ie,if, jd,je, ke,kf\}$.
	It implies that one of $d$, $e$ and $f$ has degree at least four in $H$ because the edges $cd$, $de$, $ef$ and $fg$ are already contained in $H$, 
	which yields a contradiction. 
	This completes the proof.
	\qed
\end{proof}
\bigskip

Let $S$ be the graph obtained from nine copies of $J_3$ by identifying the edges corresponding to $ab$ from each copy. It is obvious that $S$ is a planar graph.
The edge $ab$ in $S$ is called the {\em handle} of $S$.
We obtain the following as a corollary of Lemma~\ref{lem:J3}.
\begin{corollary}\label{cor:S}
Assume $H$ is a subgraph of $S$ with maximum degree at most three. 
If $H$ does not contain any edge incident with $a$ in $S$, 
then $S-E(H)$ contains $K_4$ or a member of $\mathcal{J}$ as a subgraph.
\end{corollary}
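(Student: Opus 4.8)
The plan is to reduce the corollary to Lemma~\ref{lem:J3} applied copy-by-copy, the only new feature being that $H$ is now permitted to use edges incident with $b$ (though still none incident with $a$). Since $\Delta(H)\le 3$, the vertex $b$ is incident with at most three edges of $H$, and I intend to exploit that these at most three ``spoiling'' edges cannot touch all nine copies of $J_3$.

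First I would fix notation: write $J_3^{(1)},\dots,J_3^{(9)}$ for the nine copies of $J_3$ whose handles have been identified to the single edge $ab$, and for each $i$ let $H_i$ be the subgraph of $J_3^{(i)}$ consisting of those edges of $H$ lying in copy $i$. The key structural observation is that in $J_3$ the vertex $b$ has no neighbour other than $a$, $d$ and $e$; hence in $S$ every edge of $H$ incident with $b$ has the form $bd_i$ or $be_i$ and therefore belongs to exactly one copy. Because $\deg_H(b)\le 3$, at most three of the nine copies can contain an edge of $H$ incident with $b$. Calling a copy \emph{good} if it contains no edge of $H$ incident with $b$, we conclude that at least $9-3=6$ copies are good.

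For each good copy $i$, the subgraph $H_i$ has maximum degree at most three (being a subgraph of $H$), and by the hypothesis of the corollary (no $a$-edges) together with goodness (no $b$-edges) it contains no edge incident with $a$ or $b$. Lemma~\ref{lem:J3} then applies to the pair $(J_3^{(i)},H_i)$: either $J_3^{(i)}-E(H_i)$ contains a copy of $K_4$, or it contains a subgraph isomorphic to $J_1$ or $J_2$ with handle $ab$. If some good copy yields a $K_4$, that $K_4$ already lies inside $S-E(H)$ and we are finished.

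Otherwise each of the at least six good copies contributes a subgraph isomorphic to $J_1$ or $J_2$ sharing the common handle $ab$. Since distinct copies meet only in $\{a,b\}$, choosing any six of these good copies and taking the union of the corresponding $J_1$/$J_2$ subgraphs produces exactly a graph built from six copies of $J_1$ or $J_2$ glued along the edge $ab$, i.e.\ a member of $\mathcal{J}$, and this union is a subgraph of $S-E(H)$. This proves the corollary. The only content beyond Lemma~\ref{lem:J3} is the pigeonhole step, and the one point I would verify carefully is that $b$ has no neighbours in $J_3$ besides $a,d,e$, so that each $H$-edge at $b$ is charged to a single copy; this is what guarantees that fewer than half of the nine copies are spoiled.
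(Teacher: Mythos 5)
Your proof is correct and follows essentially the same route as the paper: pigeonhole on the at most three $H$-edges at $b$ to find six good copies of $J_3$, apply Lemma~\ref{lem:J3} to each, and glue the resulting $J_1$/$J_2$ subgraphs along $ab$ into a member of $\mathcal{J}$. One small remark: your side claim that $b$ has no neighbours in $J_3$ other than $a,d,e$ appears to be false (in $J_3$ the pair $\{a,b\}$ is complete to the path $cdefg$), but this is harmless --- charging each $H$-edge at $b$ to a single copy needs only that the nine copies of $J_3$ are edge-disjoint apart from the identified handle $ab$.
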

\begin{proof}
Let $S_1,S_2,\ldots,S_9$ be the distinct subgraphs of $S$ isomorphic to $J_3$ with handle $ab$.
Since $H$ has maximum degree at most three, and $H$ does not contain any edge incident with $a$ in $S$, 
there are $1\le i_1 < \ldots <i_6 \le 9$ such that every edge incident with $b$ in $S_{i_j}$ is not contained in $H$ for $j\in [6]$.
Without loss of generality, let $i_j=j$ for $j\in [6]$.
Then, by Lemma~\ref{lem:J3}, 
for each $j\in [6]$, $S_j - E(H)$ contains  $K_4$ or a subgraph isomorphic to $J_1$ or $J_2$ with handle $ab$.
We are done if $S_j-E(H)$ contains $K_4$, 
so, we may assume that 
$S_j-E(H)$ has a subgraph $S_j'$ isomorphic to $J_1$ or $J_2$ with handle $ab$.
Then, combining $S_j'$ for $j\in [6]$, we obtain a subgraph of $S-E(H)$  isomorphic to a member of $\mathcal{J}$.
This completes the proof. \qed
\end{proof}
\bigskip

Now, we   construct a graph $G_1$ such that for every subgraph $H$ of $G_1$ with maximum degree at most $3$, $G_1-E(H)$ is not $3$-choosable as follows:
 Start with 
a star with four leaves $v_1,v_2,v_3,v_4$ and center $c$,
and for each $i\in [4]$, we add a copy of $S$ with handle $cv_i$ to the star.
That is, $G_1$ consists of four edge-disjoint copies $S_1,S_2,S_3,S_4$ of $S$ where the handle of $S_i$ is $cv_i$ for $i\in [4]$. 

\begin{theorem} \label{thm-G1}
	For every subgraph $H$ in $G_1$ with $\Delta(H) \leq 3$, $G_1-E(H)$ is not $3$-choosable.
\end{theorem}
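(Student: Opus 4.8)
The plan is to exploit the degree bound on $H$ at the center vertex $c$ to locate one copy of $S$ in which $H$ uses no edge at $c$, and then feed that copy into Corollary~\ref{cor:S}. First I would fix the convention that in each copy $S_i$ the center $c$ plays the role of the handle endpoint $a$ (so $v_i$ plays the role of $b$); this is available since the handle of $S_i$ is $cv_i$. The four copies $S_1,S_2,S_3,S_4$ are pairwise edge-disjoint, meeting only at $c$, so every edge of $G_1$ incident with $c$ belongs to exactly one copy.

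Next I would apply the pigeonhole principle at $c$. Since $\Delta(H)\le 3$, the vertex $c$ is incident with at most three edges of $H$, and these edges lie in at most three of the four copies. Hence there is an index $i_0$ such that $H$ contains no edge incident with $c$ in $S_{i_0}$. Writing $H_{i_0}$ for the restriction of $H$ to $S_{i_0}$, we have $\Delta(H_{i_0})\le 3$ and $H_{i_0}$ contains no edge incident with the handle endpoint $a=c$ of $S_{i_0}$.

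Then I would invoke Corollary~\ref{cor:S} on $S_{i_0}$: it yields that $S_{i_0}-E(H_{i_0})$ contains either a copy of $K_4$ or a member of $\mathcal{J}$ as a subgraph. Since $S_{i_0}-E(H_{i_0})$ is precisely the copy $S_{i_0}$ sitting inside $G_1-E(H)$, this structure appears inside $G_1-E(H)$. To finish, I would use that $K_4$ is not $3$-colourable (hence not $3$-choosable) and that every member of $\mathcal{J}$ is not $3$-choosable by Lemma~\ref{lem:J}, together with the elementary fact that a graph containing a non-$3$-choosable subgraph is itself not $3$-choosable.

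I do not expect a serious obstacle here: the construction (a star with exactly four leaves) is engineered precisely so that the bound $\Delta(H)\le 3$ forces a ``free'' copy via pigeonhole. The only points requiring care are the bookkeeping that the copies are edge-disjoint, so that the degree of $c$ distributes correctly across them, and the orientation convention ensuring $c$ is identified with $a$ rather than $b$, so that Corollary~\ref{cor:S} applies verbatim.
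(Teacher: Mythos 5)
Your proposal is correct and follows essentially the same route as the paper's own proof: pigeonhole on the at most three $H$-edges at the center $c$ across the four edge-disjoint copies of $S$, apply Corollary~\ref{cor:S} to the unaffected copy, and conclude via Lemma~\ref{lem:J} and the non-$3$-colourability of $K_4$. The extra care you take with identifying $c$ with the handle endpoint $a$ is a detail the paper leaves implicit, but there is no substantive difference.
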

\begin{proof}
	Let $H$ be a subgraph in $G_1$ with $\Delta(H) \leq 3$.
	We claim that $G_1-E(H)$ contains $K_4$ or a member of $\mathcal{J}$. 
	Then, by the fact that $K_4$ is not $3$-colourable (so not $3$-choosable) and Lemma~\ref{lem:J}, Theorem~\ref{thm-G1} follows.

	By adding isolated vertices to $H$, we consider $H$ as a spanning subgraph of $G$ with maximum degree at most three.  
	Since $c$ has degree at most three in $H$, there exists $i \in [4]$ such that $H$ does not contain any edge incident with $c$ in $S_i$.
	Then, by Corollary~\ref{cor:S}, $S_i - E(H)$ contains $K_4$ or a member of $\mathcal{J}$. 
	This completes the proof.
	\qed
\end{proof}
\bigskip

	Next, we   show that there is a planar graph $G_2$ such that for any star-forest $F$ of $G_2$, $G_2-E(F)$ contains $K_4$ and hence is not $3$-colourable.

	Let $A$ be the graph depicted in Figure~\ref{fig:A}. In $A$, the edge $xy$ is called the {\em handle} of $A$.
Assume $F$ is  a star forest. By a \emph{center} of $F$, we mean the center of some component of $F$. If $K_2=uv$ is a component of   $F$, we arbitrarily choose one of $u,v$ as the center.

	\begin{figure} [t!]
		\centering
		\includegraphics[scale=1.0]{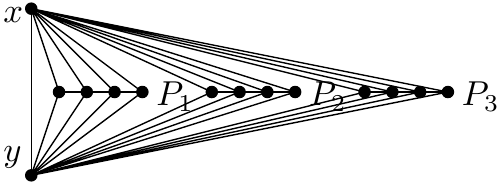}
		\caption{The graph A  consists of three disjoint paths $P_1$, $P_2$, $P_3$ on $4$ vertices and adjacent vertices $x$ and $y$ such that $\{x,y\}$ is complete to $\bigcup_{i=1}^3 V(P_i)$.}
		\label{fig:A}
	\end{figure}

	\begin{lemma}\label{lem:saw}
		Assume $F$ is a star forest in $A$.
		If neither $x$ nor $y$ is a center of $F$, 
		then $A-E(F)$ contains $K_4$.
	\end{lemma}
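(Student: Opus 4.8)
The plan is to show that $A-E(F)$ keeps one of the obvious copies of $K_4$ sitting inside $A$. First I would record which $4$-cliques $A$ contains at all. Since $x$ and $y$ are adjacent to each other and to every path vertex, each edge $uv$ lying on one of the paths $P_1,P_2,P_3$ yields a copy of $K_4$ on $\{x,y,u,v\}$. Conversely, these are the \emph{only} copies of $K_4$ in $A$: a $K_4$ cannot use at most one of $x,y$, since that would require three pairwise adjacent path vertices, and no three path vertices are pairwise adjacent (a $P_4$ has no triangle, and distinct paths are non-adjacent). Hence any $K_4$ uses both $x$ and $y$ together with two adjacent path vertices, i.e.\ with a path edge $uv$. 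It therefore suffices to exhibit a path edge $uv$ all six of whose clique edges $xy,\ xu,\ xv,\ yu,\ yv,\ uv$ avoid $E(F)$.

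Next I would analyse how $F$ can meet $x$ and $y$. Because neither $x$ nor $y$ is a center of $F$, each of them is either isolated in $F$ or a leaf of a single star, and so is incident with at most one edge of $F$. In particular, if $xy\in E(F)$ then one of $x,y$ would be the chosen center of the star (or $K_2$) carrying this edge, contradicting the hypothesis; thus the handle $xy$ survives. Moreover the unique $F$-edge at $x$ (if any) joins $x$ to its center, which must be a path vertex $u_x$, and likewise $y$ has at most one $F$-neighbour $u_y$, again a path vertex. Consequently the only path edges that can fail to extend to a surviving $K_4$ through $x$ and $y$ are those incident with $u_x$ or with $u_y$.

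The third ingredient is that $F$, being a star forest, contains no path on four vertices as a subgraph, since every star has diameter at most $2$. As each $P_i$ is itself a path on four vertices with exactly three edges, $F$ cannot contain all three edges of any single $P_i$; hence each of $P_1,P_2,P_3$ retains at least one surviving path edge. I would combine the three observations as follows: the vertices $u_x$ and $u_y$ lie on at most two of the three paths, so some path $P_j$ contains neither of them. Choosing a surviving edge $uv$ of $P_j$, which exists by the $P_4$-freeness step, we get $uv\notin E(F)$, while $u,v\notin\{u_x,u_y\}$ forces $xu,xv,yu,yv\notin E(F)$, and we already know $xy\notin E(F)$; therefore $\{x,y,u,v\}$ induces a copy of $K_4$ in $A-E(F)$.

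I do not expect a deep obstacle here; the lemma is essentially a counting argument. The one point deserving care — and the closest thing to a genuine difficulty — is the worry that $F$ might block too many path edges at once. This is exactly what the two structural facts neutralise: the hypothesis that $x,y$ are non-centers caps their $F$-contribution at the single neighbours $u_x,u_y$ (contaminating at most two paths), while the star-forest/$P_4$-free property guarantees a spare path edge in every path. Keeping the bookkeeping of these two facts aligned, so that a clean path always remains, is the crux of the write-up.
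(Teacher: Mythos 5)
Your proposal is correct and follows essentially the same route as the paper: since $x$ and $y$ are non-centers they are incident with at most one $F$-edge each (and $xy\notin E(F)$), so some path $P_j$ has no $F$-edge to $\{x,y\}$, and the star-forest/$P_4$-free property then yields a surviving edge $uv$ of $P_j$, giving the $K_4$ on $\{x,y,u,v\}$. The extra classification of all copies of $K_4$ in $A$ is harmless but not needed.
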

	\begin{proof}
		Since every center of $F$ is contained in $\bigcup_{i=1}^3 V(P_i)$,
		there is some $i\in [3]$ such that none of the edges between $\{x,y\}$ and $V(P_i)$ is contained in $F$.
		Since $F$ does not contain a path on $4$ vertices, there is an edge $uv \in E(P_i)-E(F)$.
		Then, all edges with both ends in $\{x,y,u,v\}$ remain in $G-E(F)$, and they induce $K_4$.
		This proves Lemma~\ref{lem:saw}. \qed
	\end{proof}
	\bigskip

\begin{theorem}\label{thm:example}
	There exists a planar graphs $G_2 $  such that 
	for every star forest $F$ in $G_2 $, $G_2 -E(F)$ contains $K_4$ 
\end{theorem}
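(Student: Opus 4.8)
The plan is to reduce the theorem to a purely combinatorial statement about centers of star forests, and then to force that statement by a self-referential gluing of copies of $A$. By the contrapositive of Lemma~\ref{lem:saw}, if $G_2$ is assembled as a union of copies of $A$ and $F$ is a star forest for which $G_2-E(F)$ has no $K_4$, then in \emph{every} copy at least one of its two handle vertices must be a center of $F$. So it suffices to construct a planar $G_2$, built from copies of $A$, with the property that no star forest can place a center on a handle vertex of every copy simultaneously; applying Lemma~\ref{lem:saw} to a copy both of whose handle vertices are non-centers then yields the desired $K_4$.

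Before choosing the construction I would pin down what makes this delicate, since it rules out the two obvious candidates. In contrast with the bounded-degree regime behind Theorem~\ref{thm-G1}, the center of a star forest may have unbounded degree, and every $K_4$ inside $A$ contains the handle $xy$. Hence a single star can destroy a whole copy: if one handle vertex $c$ is shared by many copies, the star centered at $c$ whose leaves are the other handle vertices deletes all of their handles at once and kills every copy; and if the copies are vertex-disjoint, then the set of all handles is a matching, hence itself a star forest, whose deletion again kills every copy. So neither identifying a common handle vertex (the analogue of the graph $S$ in the bounded-degree argument) nor taking disjoint copies can work; the construction must stop the adversary from making a handle vertex into a center cheaply.

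The mechanism I would use against an unbounded-degree adversary is that in a star forest each vertex is the leaf of at most one star, so distinct centers require distinct leaves. I would build $G_2$ so that the very vertices an apex must spend as leaves in order to kill its own copy are themselves handle vertices of \emph{deeper} copies. Each such vertex $p$ then poses a dilemma: if $p$ is a center it cannot also serve as a leaf, denying the apex above it the leaves it needs; while if $p$ is a leaf it is not a center, which frees room in the copy that $p$ belongs to. Concretely I would nest copies of $A$ in finitely many levels, letting the four-vertex paths $p_1p_2p_3p_4$ of a copy host the handles $p_1p_2$ and $p_3p_4$ of two child copies, and terminating at the deepest level with ordinary private paths. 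One then argues by descent: assuming a center sits on a handle vertex of every copy, follow the leaves that this forces from level to level until one reaches a copy both of whose handle vertices are non-centers, contradicting the assumption; Lemma~\ref{lem:saw} applied to that copy completes the proof.

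The main obstacle is exactly this combinatorial core: arranging the branching, the pairing of path vertices into child handles, and the number of levels so that the descent is guaranteed to close, i.e.\ so that the leaf-distinctness of star forests genuinely prevents a center from sitting on a handle vertex of every copy at once. This is where the real counting lies, and it is the step most likely to require a careful choice of gadget or an extra twist. A secondary but genuine issue is planarity: each copy of $A$ is planar and the gluings are along single edges and vertices, so a plane embedding can be maintained level by level, but one must cap the number of copies meeting at any shared vertex so as not to create a $K_{3,3}$ or $K_5$ minor.
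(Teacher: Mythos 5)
You set up the reduction correctly: by Lemma~\ref{lem:saw}, in a graph assembled from copies of $A$ a star forest $F$ avoiding all the $K_4$'s must place a center on at least one handle vertex of every copy, and your analysis of why the two naive constructions (a common apex, or disjoint copies) fail is sound. But the proof stops exactly where it has to deliver: your nested, multi-level gadget is never specified, and you yourself flag the ``combinatorial core'' --- the branching, the pairing of path vertices into child handles, the number of levels, and the termination of the descent --- as an unresolved obstacle. Without that, there is no construction and no theorem; it is not even clear that any finite-depth nesting of the kind you sketch defeats the adversary, precisely because of the phenomenon you identified, namely that a single high-degree star can service many copies at once.

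The paper closes this gap with a much lighter mechanism that you did not consider. It takes a small fixed planar graph $D$ and attaches a copy of $A$ along \emph{every} edge of $D$. Lemma~\ref{lem:saw} then forces the set of centers of $F$ to be a vertex cover of $D$, and the contradiction is extracted inside $D$ itself using two elementary properties of star forests: no edge of $F$ joins two centers, and a non-center vertex lies in at most one star and hence meets at most one edge of $F$. Concretely, $D$ contains a $K_4$ on $\{a,b,c,d\}$ sharing the triangle $bcd$ with a second $K_4$ on $\{b,c,d,z\}$; the vertex-cover condition forces three of $a,b,c,d$ to be centers, the surviving non-center $d$ can absorb only one $F$-edge, and then $z$ is forced to be a center, leaving $\{b,c,d,z\}$ intact in $G_2-E(F)$. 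So the ``extra twist'' you were looking for is not recursion at all but a vertex-cover argument on a well-chosen base graph; if you want to salvage your write-up, replacing the nesting scheme with such a base graph $D$ and carrying out the short case analysis is the missing step.
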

\begin{proof}
	Let $D$ be the graph depicted in Figrue~\ref{fig:D}.
	\begin{figure} [ht!]
		\centering
		\includegraphics[scale=0.8]{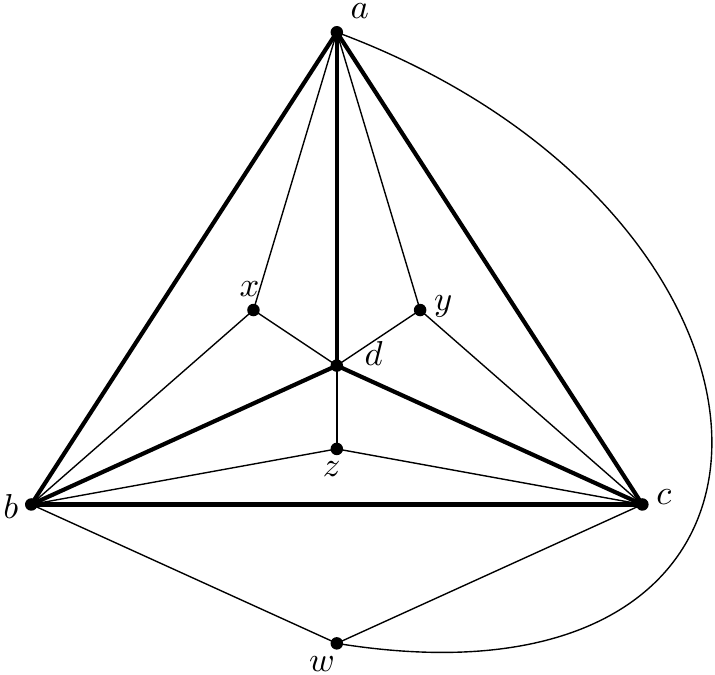}
		\caption{The graph $D$.}
		\label{fig:D}
	\end{figure}
	We construt  $G_2$ from $D$ by attaching,  for each edge $e$ of $D$, a copy of $A$ with handle $e$ to  $D$.
	
	By the construction of $G_2$, 
	we know that $G_2$ contains $18$ edge-disjoint copies of $A$ where the handle of each copy is an edge of $D$. 
	For each $e\in E(D)$, let $A_e$ be the copy of $A$ in $G_2$ with handle $e$.

	Suppose, for the sake of contradiction, that there exists a star forest $F$ in $G_2$ such that $G_2-E(F)$ does not contain $K_4$.
	
	For every edge $e$ of $D$, at least one end of $e$ is a center of $F$, since otherwise, 
	$A_e-E(F) (\subseteq G_2-E(F))$ contains $K_4$  by Lemma~\ref{lem:saw}.
	Since $\{a,b,c,d\}$ induces $K_4$, at least three of them are centers of $F$. 
	Without loss of generality, we assume that $a$, $b$ and $c$ are centers of $F$.
	Then, the edges $ab$, $bc$ and $ca$ are not contained in $F$ since there is no edge in a star forest joining two centers.
	If none of $\{ad,bd,cd\}$ is contained in $F$, then $\{a,b,c,d\}$ induces $K_4$ in $G_2-E(F)$.
	Hence, $\{ad,bd,cd\} \cap E(F)\neq \emptyset$. This implies that $d$ is not a center
since  $a,b,c$ are centers of $F$,
	so, exactly one of $\{ad,bd,cd\}$ is contained in $F$. 
	Without loss of generality, we may assume that $ad \in E(F)$. 
	
	Now, since $d$ is not a center of $F$, $z$ must be a center of $F$.  So, $bz$ and $cz$ are not contained in $F$.  And since $ad$ is in $F$ and $d$ is not a center of $F$, $dz$ is not contained in $F$.  
	Thus every edge with both ends in $\{b,c,d,z\}$ is not contained in $F$.
	Therefore, $\{b,c,d,z\}$ induces $K_4$ in $G_2 -E(F)$.
	This completes the proof of Theorem \ref{thm:example}.  \qed
\end{proof}
 \medskip

\section{Alon-Tarsi number of a planar graph minus a forest}

We say a digraph $D$ is \emph{Eulerian} if $d^+_D(v) = d^{-}_D(v)$ for every vertex $v$. Assume $G$ is a graph and $D$ is an orientation of $G$. Let $EE(D)$(respectively, $OE(D)$) be the set of spanning Eulerian sub-digraphs of $D$ with an even (respectively, an odd) number of edges. An orientation $D$ of a graph $G$ is called {\em an Alon-Tarsi orientation} if $|EE(D)| \neq |OE(D)|$.
Alon and Tarsi \cite{AT92} proved that if $D$ is an orientation of $G$, and $\eta(x_v) = d^+_D(v)$, then $|c_{P_G, \eta}|$ is equal to the absolute value of the difference $|EE(D)| -|OE(D)|$. Hence the Alon-Tarsi number of $G$ can be defined alternatively as the minimum integer $k$ such that $G$ has an Alon-Tarsi orientation $D$ with     $d_D^+(v) < k$ for every vertex $v \in V(G)$.

\begin{theorem}\label{thm:main}
	For every planar graph $G$, there exists a forest $F$ in $G$ such that $G-E(F)$ has AT number at most three.
\end{theorem}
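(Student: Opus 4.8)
The plan is to use the orientation formulation of the Alon--Tarsi number recalled just above the statement: it suffices to produce a forest $F$ in $G$ together with an orientation $D$ of $G-E(F)$ in which every vertex has out-degree at most $2$ and for which $|EE(D)|\neq |OE(D)|$. I will in fact arrange $D$ to be \emph{acyclic}; an acyclic orientation has the empty sub-digraph as its only spanning Eulerian sub-digraph, so $|EE(D)|=1$ and $|OE(D)|=0$, and hence $D$ is automatically an Alon--Tarsi orientation. Since $d_D^+(v)\le 2<3$ for every $v$, this gives $AT(G-E(F))\le 3$.

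First I would reduce to the case of a maximal planar graph. Fix a plane embedding of $G$ and add edges inside the faces to obtain a triangulation $G^+$ on the same vertex set (the cases $|V(G)|\le 2$ are trivial). Suppose we find a forest $F^+$ of $G^+$ and an acyclic orientation $D^+$ of $G^+-E(F^+)$ with all out-degrees at most $2$. Setting $F:=F^+\cap E(G)$ gives a forest of $G$, and restricting $D^+$ to the edges of $G-E(F)=G-(E(F^+)\cap E(G))$, which is a spanning subgraph of $G^+-E(F^+)$, yields an acyclic orientation of $G-E(F)$ whose out-degrees are still at most $2$. Thus it is enough to treat triangulations, and there I never need a monotonicity property of $AT$ --- only that deleting arcs preserves acyclicity and cannot raise out-degrees.

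For a triangulation $G^+$ I would invoke a canonical (shelling) ordering $v_1,\dots,v_n$ of de Fraysseix--Pach--Pollack: for each $k\ge 3$ the neighbours of $v_k$ in $G_{k-1}:=G^+[\{v_1,\dots,v_{k-1}\}]$ form a subpath $w_1,\dots,w_{m_k}$ (with $m_k\ge 2$) of the outer boundary of $G_{k-1}$, and passing from $G_{k-1}$ to $G_k$ replaces $w_2,\dots,w_{m_k-1}$ on the boundary by $v_k$. Orient every edge from its higher-indexed end to its lower-indexed end; this orientation is acyclic. For each $k\ge 3$ I declare the two \emph{extreme} back-edges $v_kw_1$ and $v_kw_{m_k}$ to be kept, and I put the remaining back-edges $v_kw_2,\dots,v_kw_{m_k-1}$ into $F^+$. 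Then every $v_k$ with $k\ge 3$ keeps out-degree exactly $2$, while $v_1,v_2$ have out-degree at most $1$, so the resulting orientation $D^+$ of $G^+-E(F^+)$ is acyclic with maximum out-degree $2$.

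The crux is to show that $F^+$ is a forest, and this is where I would use the key feature of the shelling: the \emph{middle} neighbours $w_2,\dots,w_{m_k-1}$ leave the outer boundary at step $k$ and, being interior vertices of every later $G_{k'}$, are never again adjacent to a vertex added afterwards. Consequently each vertex is a middle neighbour for \emph{at most one} value of $k$, so in the downward (high-to-low) orientation every vertex has in-degree at most $1$ inside $F^+$. A digraph that is acyclic and has all in-degrees at most $1$ has at most $|V|-1$ edges in each connected component, since a component with more edges would have all in-degrees equal to $1$ and hence contain a directed cycle; therefore $F^+$ is a forest. Combining this with the orientation of the previous paragraph and with the reduction to triangulations completes the argument. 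The main point to be careful about is exactly this burial property of the canonical ordering, as it is what simultaneously enforces the out-degree bound and the ``acyclic with bounded in-degree'' structure that makes $F^+$ acyclic as an undirected graph.
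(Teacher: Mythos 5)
Your proof is correct, but it takes a genuinely different route from the paper's. The paper proves a Thomassen-style inductive lemma for plane near-triangulations whose boundary is a simple cycle: for a prescribed boundary edge $e=xy$ it produces a forest $F$ containing $e$ and an orientation of $G-E(F)$ with $|EE|\neq |OE|$, out-degree $0$ at $x$ and $y$, at most $1$ on the boundary and at most $2$ in the interior; the induction either splits the graph at a chord (multiplying the differences $|EE|-|OE|$ of the two pieces) or deletes a boundary vertex $z$, putting one edge at $z$ into the forest and orienting the rest of $z$'s link into $z$. You instead triangulate, take a canonical (shelling) ordering, orient every edge from higher to lower index, and move the ``middle'' back-edges of each $v_k$ into $F^{+}$: acyclicity makes $|EE(D)|=1$ and $|OE(D)|=0$ immediate, and the burial property of the ordering (each vertex is a middle neighbour at most once) gives in-degree at most $1$ in $F^{+}$, whence $F^{+}$ is a forest; the reduction to triangulations and the restriction of $F^{+}$ and $D^{+}$ back to $G$ are both sound. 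Your argument is shorter and non-inductive, and it makes explicit that the orientation can be taken acyclic (which in fact also holds for the orientation the paper builds, though the paper tracks $|EE|-|OE|$ instead). What the paper's formulation buys is the finer boundary control --- out-degree $0$ at two adjacent boundary vertices and at most $1$ along the whole boundary --- which is exactly the invariant that lets the chord-splitting and vertex-deletion steps compose; your construction does not provide that refinement, but the theorem does not need it. Neither construction bounds the maximum degree of the forest, so both leave the paper's concluding question open.
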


\begin{definition} \label{nice-orientation} \rm
Assume $G$ is a plane graph, $e=xy$ is a boundary edge of $G$, and $F$ is a forest in $G$ containing $e$.
An orientation $D$ of $G' = G -  E(F)$ is {\em nice} for $(G, e, F)$ if the following hold:
\begin{itemize}
\item[(1)] $|EE(D)| \neq |OE(D)|$,

\item[(2)] $d_D^{+}(x) = d_D^{+}(y) = 0$, \ $d_D^{+}(v) \leq 1$ for every boundary vertex $v$ of $G$,  and $d_D^{+}(u) \leq 2$ for every interior vertex $u$ of $G$. 
\end{itemize}
\end{definition}


Note that it is obvious that Theorem \ref{thm:main} follows from Lemma \ref{main-lemma} below.

\begin{lemma} \label{main-lemma}
Assume $G$ is a plane graph of which boundary is a simple cycle.
Suppose every interior face of $G$ is a triangle.
Then, for any boundary edge $e=xy$ of $G$, there exists a forest $F$ in $G$ containing $e$ 
such that  $G' = G -  E(F)$ has a nice orientation $D$ for $(G,e,F)$.
\end{lemma}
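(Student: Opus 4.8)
\textbf{Proof strategy for Lemma \ref{main-lemma}.}

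The plan is to prove this by induction on the number of vertices of $G$, using the boundary edge $e=xy$ as an anchor, in the spirit of Thomassen-style arguments for planar colouring and the Alon-Tarsi refinements in \cite{Zhu-5} and \cite{GZ}. The orientation $D$ we seek must be an Alon-Tarsi orientation (condition (1)) whose out-degrees obey the tight budget in (2): the two endpoints of the handle get out-degree $0$, every other boundary vertex gets out-degree at most $1$, and interior vertices get out-degree at most $2$. Since $x$ and $y$ have out-degree $0$ and all other vertices have out-degree at most $2$, any such orientation automatically witnesses $AT(G-E(F)) \le 3$, which is why the lemma implies Theorem \ref{thm:main}. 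The forest $F$ is being built in tandem with the orientation: we delete a controlled set of edges (those we cannot afford to orient) and must verify that what we delete forms a forest containing $e$.

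First I would set up the base case (a single triangle or the smallest triangulated near-triangulation) directly, checking that we can delete $e$ into $F$ and orient the remaining one or two edges so that the Eulerian subgraph count is lopsided. For the inductive step, the key is to peel off a vertex or a fan along the boundary. Let $u$ be the boundary vertex adjacent to $x$ along the outer cycle other than $y$; its interior neighbours $u=w_0, w_1, \ldots, w_k, y'$ form a fan because every interior face is a triangle. I would contract the problem to the smaller plane graph $G^{-}$ obtained by deleting $u$ (or the relevant chord region), whose boundary is again a simple cycle with all interior faces triangles, apply the induction hypothesis to $G^{-}$ with an appropriately chosen boundary edge, and then extend the resulting forest and orientation back across the removed piece. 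The edges incident to $u$ must be split: some are oriented (contributing to $u$'s out-degree, which is allowed to be up to $1$ if $u$ is a boundary vertex of the original $G$, or up to $2$ if it becomes interior), and the rest are placed into the forest $F$.

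The crux — and the step I expect to be the main obstacle — is simultaneously maintaining three things across the induction: (a) the parity/nonvanishing condition $|EE(D)| \ne |OE(D)|$ when we graft the fan orientation onto the inductively obtained orientation of $G^{-}$, (b) the out-degree budget, particularly ensuring the newly created boundary vertices (those that were interior in $G$ but lie on the boundary of $G^{-}$) respect the bound expected of them in the subproblem, and (c) the acyclicity of $F$, i.e. that the edges we push into the forest never close a cycle. For (a) I would track how the difference $|EE(D)|-|OE(D)|$ transforms: since this difference equals (up to sign) the coefficient $c_{P_{G'},\eta}$, I would argue that the fan contributes a nonzero multiplicative or additive factor, so a nonzero coefficient in the subproblem lifts to a nonzero coefficient in $G$. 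The delicate bookkeeping is that the choice of which edges at $u$ go into $F$ versus get oriented is constrained both by the degree budget and by the requirement that $F$ stay a forest; I anticipate needing a careful case analysis on the size of the fan and on whether $u$'s neighbours already carry forest edges, choosing the secondary handle in $G^{-}$ so that its induction hypothesis hands back exactly the orientation data needed to close the argument.
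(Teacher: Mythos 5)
Your skeleton matches the paper's: induction on $|V(G)|$, a chord-splitting case, and a case where you delete the boundary neighbour of $x$ whose link is a fan. But the proposal stops exactly where the proof has to be done, and the decisive idea is missing. The condition $d_D^+(x)=d_D^+(y)=0$ in Definition \ref{nice-orientation} is not merely part of the out-degree budget; it is the engine that controls the Eulerian counts. In the vertex-deletion case the paper puts the \emph{single} edge $zw$ into $F$ (where $x,z,w$ are consecutive on the boundary cycle), orients $z\to x$, and orients every internal vertex of the fan path into $z$. Then $F=F'\cup\{zw\}$ is a forest for free, since $z$ has degree $1$ in $F$; the out-degree budget is met because the fan's internal vertices were boundary vertices of $G-z$ (out-degree $\le 1$ there by induction) and are interior in $G$ (where $2$ is allowed); and, crucially, no directed cycle of $D$ can pass through $z$, because $z$'s unique out-neighbour is $x$ and $x$ is a sink. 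Hence $|EE(D)|=|EE(D')|$ and $|OE(D)|=|OE(D')|$ exactly --- there is no ``nonzero multiplicative or additive factor'' from the fan to analyse; the fan contributes nothing to the Eulerian subdigraph count. Your plan to track how the coefficient $c_{P_{G'},\eta}$ transforms under the grafting is much harder than what is needed and is not how the argument closes.

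The same mechanism resolves the chord case, which you mention only parenthetically: splitting along a chord $uv$ and applying induction to $(G_2,uv)$ yields an orientation in which $u$ and $v$ are sinks, so no directed cycle uses edges from both sides, the quantities $|EE|-|OE|$ multiply, and $F_1\cup(F_2-uv)$ is a forest because deleting $uv$ from $F_2$ puts $u$ and $v$ in different components. Without these specific choices --- which single edge enters $F$, which way the fan edges point, and why the sinks at the handle kill every cross-cycle --- the three difficulties (a), (b), (c) you list remain genuinely open; the ``careful case analysis'' you anticipate is avoided entirely by the one canonical choice above.
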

\begin{proof}
We prove the lemma by induction on $|V(G)|$.  
It is trivial when $|V(G)|=3$.

Assume $|V(G)|>3$.
Let $C$ be the boundary cycle of $G$.
We consider the following two cases. 

\medskip

\noindent {\bf Case 1}:  $C$ has a chord $uv$.

There are two internally disjoint paths $P_1$ and $P_2$ from $u$ to $v$ in $C$.
For $i=1,2$, let $C_i$ be the cycle consisting of $P_i$ and $uv$,
and $G_i$ be the plane subgraph of $G$ bounded by $C_i$. 
Without loss of generality, we assume $xy \in E(G_1)$.
Clearly, $G_1$ and $G_2$ are plane graphs with interior faces being triangles, their boundaries are simple cycles, and furthermore, $e(G_1),e(G_2) < e(G)$.
So, we can apply the induction hypothesis to $(G_1,xy)$ and $(G_2,uv)$, 
and we obtain forests $F_1$ in $G_1$ and $F_2$ in $G_2$ such that
\begin{itemize}
\item $xy \in E(F_1)$, and $G_1-E(F_1)$ has a nice orientation $D_1$ for $(G_1,xy,F_1)$,  and
\item $uv \in E(F_2)$, and $G_2 -E(F_2)$ has a nice orientation $D_2$ for $(G_2,uv,F_2)$.
\end{itemize}

Let $G_2'=G_2-uv$, and $F_2'=F_2-uv$.
Then, clealry, $G_2-E(F_2)=G_2'-E(F_2')$, and we can consider  
 $D_2$ as an orientation of $G_2'-E(F_2')$.
Clearly, $G_1$ and $G_2'$ are edge-disjoint, and $F_1$ and $F_2'$ are edge-disjoint.
So,  
 $D=D_1\cup D_2$ is well-defined and forms an orientation of $G (=G_1\cup G_2')$, and
since $u$ and $v$ are contained in distinct components of $F_2'$,
it follows that $F=F_1 \cup F_2'$ is a forest  containing $xy$.
We claim that $D$ is a nice orientation of $G-E(F)$ for $(G,xy,F)$.

Since $d_{D_2}^+(u) = d_{D_2}^+(v) = 0$, there is no directed path from $u$ to $v$ or from $v$ to $u$ in $D_2$.
Therefore, there is no directed cycle in $D$ containing both an edge in $D_1$ and an edge in $D_2$.
This implies that every spanning Eulerian sub-digraph of $D$ can be decomposed into a spanning Eulerian sub-digraph of $D_1$ and a spanning Eulerian sub-digraph of $D_2$.
Therefore, we have 
\[
\begin{array}{ccc}
|EE(D)| & = & 
\big(|EE(D_1)| \times |EE(D_2)|\big) + \big(|OE(D_1)| \times |OE(D_2)|\big) \\
|OE(D)| & = & 
\big(|EE(D_1)| \times |OE(D_2)| \big) +  \big( |OE(D_1)| \times |EE(D_2)| \big).
\end{array}
\]
So, we have
\[
|EE(D)| - |OE(D)| = (|EE(D_1)| - |OE(D_1)|) \cdot (|EE(D_2)| - |OE(D_2)|). 
\]
Now, since $|EE(D_1)| - |OE(D_1)| \neq 0$ and  $|EE(D_2)| - |OE(D_2)| \neq 0$, we conclude that $|EE(D) - OE(D)| \neq 0$.
Therefore, $D$ satisfies the condition (1) in the definition of a nice orientation for $(G,e,F)$.
Next, since $d_{D_2}^+(u) = d_{D_2}^+(v) = 0$, it is easily checked that $D$ satisfies Condition (2) of Definition \ref{nice-orientation}.

\bigskip
\noindent {\bf Case 2}:   $C$ has no chord. 

Let $z(\neq y),w$ be the vertices of $C$ such that $x, z, w$ are consecutive in $C$. 
(It is possible that $w=y$.)
Since every interior face of $G$ is a triangle, the neighbors of $z$ in $G$ forms a path $P$ from $x$ to $w$.
 Since $|V(G)|>3$ and $G$ has no chord, $P$ has length at least two.
Now we consider the graph $G'=G-z$.

Clearly, $G'$ is a plane graph with interior faces being triangles and its boundary is the cycle obtained from $C$ by removing $z$ and adding $P$. 
Furthermore, $e(G')<e(G)$.
Hence, by the induction hypothesis, there is a foreset $F'$ in $G'$ containing $xy$
where
$G'-E(F')$ has a nice orientation $D'$ for  $(G',xy,F')$.
Let $F=F' \cup \{zw\}$. 
Clearly, $F$ is a forest in $G$ containing $xy$.
We extend $D'$ to an orientation of $G-E(F)$ by adding 
$\{\overrightarrow{zx}\}$ and $\{\overrightarrow{uz}\mid u \in V(P)\setminus \{x,w\}\}$.
We claim that $D$ is a nice orientation of $G-E(F)$ for $(G,xy,F)$.

There are no added arcs going out from vertex in $V(C)$ except $\overrightarrow{zx}$, 
so
$d^+_D(x)=d^+_D(y)=0$ and $d^+_D(u) \le 1$ for every vertex $u\in V(C)$ since $z$ has only one outgoint edge in $D$. Since $d_{D'}(u) \leq 1$ for $u \in V(P) \setminus \{x, w\}$, we have $d^+_{D}(u) \le 2$ for every $u \in V(G') \setminus V(C)$.
Therefore, the condition (2) holds.

Next, we will show that  Condition (1) of Definition \ref{nice-orientation}  holds.
Let $H$ be a directed cycle in $D$.  
If $H$ contains any arc from $V(P)$  to $z$, then $H$ must pass $x$ since 
$x$ is the only out-neighbor of $z$.  But, this is not possible since $d_{D}^+(x)  = 0$.  
Therefore, $H$ does not contain any edges which is incident to $z$.
This means that $H$ is a directed cycle in $D'$.  
Thus, every spanning Eulerian sub-digraph of $D$ consists of the isolated vertex $z$ and a spanning Eulerain sub-digraph of $D'$, 
so 
$|EE(D)|=|EE(D')|$ and $|OE(D)|=|OE(D')|$, and by the assumption that $|EE(D')|\neq |OE(D')|$,
we have $|EE(D)|\neq |OE(D)|$.
Therefore, $D$ is a nice orientation of $G-E(F)$ for $(G,xy,F)$.
This completes the proof. \qed
\end{proof}

  \bigskip

\medskip
 
The following question remains open.

\begin{question}
	Is there a constant $d$ such that   every planar graph $G$ has a forest $F$ of maximum degree at most $d$ such that $G-E(F)$ has Alon-Tarsi number at most $3$? If so, what is the smallest $d$?
\end{question} 


\end{document}